\newtheorem{theorem}{\bf Theorem}[section]
\newtheorem{proposition}{\bf Proposition}[section]
\newtheorem{lemma}{\bf Lemma}[section]
\newtheorem{corollary}{\bf Corollary}[section]
\newtheorem{remark}{\bf Remark}[section]
\newtheorem{definition}{\bf Definition}[section]
\newtheorem{example}{\bf Example }[section]
\newcommand{\blue}{\textcolor[rgb]{0.00,0.00,1.00}}
\newenvironment{proof}{
\begin{trivlist}
\item[\hspace{\labelsep}{\bf\noindent Proof. }] }{\hfill $\Box$\end{trivlist}
\par}
\date{\empty}
\title{
\huge\bf Stochastic comparisons, differential entropy and varentropy for  distributions induced  by probability density functions\thanks{
Accepted for publication in {\em Metrika}.}\\ 
}
\author{
\bf Antonio Di Crescenzo$^{(1)}$,\; Luca Paolillo$^{(2)}$,\;  Alfonso Su\'arez-Llorens$^{(3)}$
\\
\\
\normalsize (1)  Dipartimento di Matematica, Universit\`a degli Studi di Salerno\\ 
\normalsize Via Giovanni Paolo II, 132, 84084 Fisciano (SA), Italy \\
\normalsize Email: adicrescenzo@unisa.it \ -- \ ORCID: 0000-0003-4751-7341 \\
\\
\normalsize (2)  Dipartimento di Matematica, Universit\`a degli Studi di Salerno\\ 
\normalsize Via Giovanni Paolo II, 132, 84084 Fisciano (SA), Italy \\
\normalsize Email: lpaolillo@unisa.it \ -- \ ORCID: 0000-0001-7146-4863 \\
\\
\normalsize (3)  Dpto.\ Estad\'istica e Investigaci\'on Operativa, Universidad de C\'adiz\\
\normalsize Facultad de Ciencias, Campus Universitario \\
\normalsize R\'io San Pedro s/n, 11510 Puerto Real, C\'adiz, Spain \\
\normalsize Email: alfonso.suarez@uca.es \ -- \ ORCID: 0000-0002-7679-9328
}
\begin{document}
 
\maketitle

\begin{abstract}
Stimulated by the need of describing useful notions related to information measures, 
we introduce the `pdf-related distributions'. These are defined in terms of  
transformation of absolutely continuous random variables 
through their own probability density functions. 
We investigate their main characteristics, with reference to 
the general form of the distribution, the quantiles, and 
some related notions of reliability theory. 
This allows us to obtain a characterization 
of the pdf-related distribution being uniform for distributions of exponential and Laplace type as well. 
We also face the problem of stochastic comparing the pdf-related 
distributions by resorting to suitable stochastic orders.
Finally, the given results are used to analyse properties and to compare 
some useful information measures, 
such as the differential entropy and the varentropy. 

\medskip\noindent
{\bf Keywords}: Differential Entropy; Differential Varentropy; Stochastic orders; Information Measures; Decreasing Rearrangement. 

\medskip\noindent
{\bf Mathematics Subject Classification (MSC)}: 60E05; 60E15; 62E99; 94A17

\end{abstract}


\section{Introduction}\label{sec:intro}
It is well known that various information measures for absolutely continuous 
random variables are given by expectations of functions of probability densities.  
Typical examples in this area involve the differential entropy and the related varentropy. 
These notions have been largely used in several contexts in order to describe the 
information content and the variability of stochastic systems.  However, in some cases 
the results dealing with these measures are not easily manageable, and require non trivial 
efforts in order to appropriately compare the quantities under consideration. 
Stimulated by the need of dealing with effective probabilistic and statistical 
tools for assessing the above mentioned information measures, 
in this paper we introduce the so-called `pdf-related distributions'. These are 
constructed by means of transformation of absolutely continuous random variables 
through their own probability density functions (pdf's). 
\par
Let us recall that, given an absolutely continuous random variable $X$ having pdf $f(x)$, 
the interest on transformations of the form $Y=f(X)$ stems in various statistical contexts. 
\\
(a) \ The expectation $\mathbb{E}[f(X)]$ is involved in the computation 
of the relative asymptotic efficiency of the Wilcoxon test relative to the $t$-test 
(cf.\ Section 1 of Hodges and Lehmann \cite{Hodges}, for instance). Additionally, it is also used in Ahmad and Kochar \cite{ahmko} for testing the classical dispersive ordering between two univariate random variables.
\\
(b) This quantity is also employed in Information Theory for constructing suitable measures of uncertainty. 
Indeed, the term $\int f(x)^2\, {\rm d}x$ is involved in the definition of the `R\'enyi entropy' of order 2 (see, for instance, 
Nadarajah and Zografos \cite{NadarajahZografos}) and of the `differential entropy' of $X$ as the expectation of the transformation $-\log(f(X))$ (cf.\ Definition D.2 of 
Lad {\em et al.} \ \cite{Lad2015}). 
\\
(c) The analysis of the distribution function of $f(X)$, for $f$ continuous, arises in problems concerning 
the so-called `confidence level' of $f$, defined as $\mathbb{P}[f(X)\leq f(a)]$ and used in 
Rommel {\em et al.}\ \cite{Romm2021} for applications in Functional Data. 
\par
Clearly inspired by the previous studied, in this paper 
we investigate the main properties of the pdf-related distributions, with special reference to 
(i) the general form of their distribution functions, (ii) the connections with 
various notions of interest in reliability theory (such as the residual lifetime),  
(iii) the determination of quantiles. 
We analyse pdf-related distributions generated by general distributions. 
However, special attention is devoted to the case when the underlying distributions possess 
monotone or unimodal pdf's. Here, unimodality refers to strictly monotone pdf's 
or to pdf's that are first strictly increasing and then strictly decreasing. 
\par
The special form that is involved in their definition allows to tackle the 
problem of comparing the pdf-related distributions by resorting to suitable stochastic 
orders, such as the usual stochastic order, the dispersive order, the convex trasform order, 
the star order and the kurtosis order. Hence, in this framework the main results involve 
both location and variability orderings, and are obtained by techniques based on variability, convexity and rearrangement of pdf's. 
\par
As main application,   the pdf-related distributions are finalized to construct a 
stochastic framework aimed to compare the basic information measures for absolutely continuous 
random variables.  This allows to come to new results on the differential entropy, on the   varentropy, and on the varentropy of residual lifetimes. 
\par
This is the plan of the paper. In Section \ref{sect:Background} we recall some useful notions on random lifetimes and 
residual lifetimes, also with reference to the related information content, the differential entropy and the differential 
varentropy. Then, necessary results on unimodal distributions and their residual lifetime distributions 
are recalled. In Section \ref{sec:pdfreldist} we introduce the notion of distributions induced by probability density functions, namely pdf-related distributions. 
Under suitable monotonicity or unimodality assumptions we investigate their distributions and the related residual lifetime distributions, expressed in terms of quantiles. 
We also obtain a characterization  of  the pdf-related distribution being $(0,1)$-uniform  for distributions of exponential and Laplace type. 
Section \ref{sec:stocorders} focus on results based on stochastic orders, 
mainly finalized to perform stochastic comparisons between pdf-related distributions, also with reference to rearrangements 
of distributions. In Section \ref{secstvarent} further stochastic comparisons are performed for the 
pdf-related distributions, and are also finalized to compare the differential entropy and varentropy of random lifetimes and of residual lifetimes.  The new results of the paper are essentially located in Sections 4 and 5. 
\par
Throughout the paper we denote by $[X|B]$ a random variable having the same distribution of $X$ conditional on $B$. 
Moreover, we write $Y=_{st}X$ when $X$ and $Y$ are identically distributed. Furthermore, $\phi'(x)$ denotes the 
derivative of any differentiable function $\phi(x)$. 
%
\section{Background}\label{sect:Background}
To improve the readability of the paper, we establish the regularity conditions such as the random variable $X$ is absolutely continuous having a strictly increasing cumulative distribution function (cdf) $F(x)=\mathbb{P}(X \leq x)$. We will denote by $\overline{F}(x)=1-F(x)$ the survival function and by $f(x)$ the pdf supported on a real interval $S_X \subseteq \mathbb{R}$. Finally, given $u\in (0,1)$, we will denote by $F^{\, -1}(u)$ the quantile function of $X$.

Now we provide some key definitions that we will use along the paper. Let $X$ be a lifetime random variable such that $F(0)=0$. Given a unit which has survived up to time $t$, its residual life is given by
\begin{equation}\label{eq:Xt}
X_t:=[X-t|X>t], \qquad t\in S_X. 
\end{equation}
By denoting $F_t(x)=\mathbb{P}(X_t\leq x)$ the cdf of $X_t$, the survival function and pdf of $X_t$ are given respectively by: 
\begin{equation} 
\label{eq:survfunreslifetime}
\overline{F}_t(x) =1-F_t(x) =  \frac{\overline{F}(x+t)}{\overline{F}(t)}, 
\end{equation}
\begin{equation} 
\label{eq:pdfreslifetime}
f_t(x)= \frac {{\rm d}{F_t(x)}}{{\rm d}{x}}  
=  \frac{f(x+t)}{\overline{F}(t)}.
\end{equation} 

The conventional approach used to characterize the failure distribution of $X$ 
is either by its (instantaneous) hazard rate function
\begin{equation}
\lambda(x)=\frac{f(x)}{\overline{F}(x)}
=\lim_{h\to 0^+} \frac{1}{h} \mathbb P[X\leq x+h|X>x], 
\qquad x\in S_X,
\label{eq:defhazfunct}
\end{equation} 
or by the cumulative hazard rate function of $X$, 
\begin{equation}\label{eq:Lambda}
\Lambda(x)=-\log{\overline{F}(x)}
= \int_0^x \lambda(s)\, {\rm d}s,  \qquad x\in S_X,
\end{equation}
both functions playing a relevant role in numerous contexts. On the other hand, $X$ is said to be Increasing Failure Rate (IFR) if $\lambda(x)$ increases in $x$. In the same manner we
define DFR (Decreasing Failure Rate) if  $\lambda(x)$ decreases in $x$. 
\par
Another key concept is given by the {\em information content} of $X$ defined as the following random variable 
\begin{equation}\label{eq:defICX}
 IC(X):=-\log f(X).
\end{equation}
This  random variable  plays a relevant role in Information Theory, since it describes the amount of uncertainty 
contained in the outcome of $X$. Indeed, it is well-known that the {\em differential entropy} of $X$ is given by the 
expectation of $IC(X)$, i.e.\ (cf.\ Cover and Thomas \cite{CoverThomas1991}) 
\begin{equation}\label{eq:defHX}
 H(X):= \mathbb{E}[IC(X)]= - \mathbb{E}[\log f(X)]
 = -\int_{-\infty}^\infty{f(x)\log{f(x)}}\,{\rm d}x.
\end{equation}
In this framework, a relevant notion that measures the variability of the information content is the  
{\em differential varentropy} of $X$, given by (cf.\ Fradelizi {\em et al.}\ \cite{Fradelizi2016}) 
\begin{eqnarray}\nonumber
V(X) \!\!\!\! & := & \!\!\!\! \mathrm{Var}[IC(X)]= \mathrm{Var}[-\log f(X)]
 =\mathbb{E}[(IC(X))^2]-[H(X)]^2
 \nonumber \\
 & =& \!\!\!\! \int_{-\infty}^\infty{f(x)[\log{f(x)}]^2}\,{\rm d}x- \left[\int_{-\infty}^\infty{f(x)\log{f(x)}}\,{\rm d}x\right]^2.
 \label{eq:defvarentropy}
\end{eqnarray}
\par
The previous definitions can be also stated for the residual life. It is well known that the residual entropy is given by  (cf.\  Ebrahimi \cite{Ebr1996} and 
Ebrahimi and Kirmani \cite{EbrKirm1996}) 
\begin{equation}\label{eq:resentropy}
 H(X_t)= \mathbb{E}[IC(X_t)]
 =-\int_t^\infty{\frac{f(x)}{\overline{F}(t)}\log{\frac{f(x)}{\overline{F}(t)}} \,{\rm d}x}, 
 \qquad  t\in S_X.
\end{equation}
The following alternative expressions hold, for $t\in D$:
$$
 H(X_t) = -\Lambda(t)-\frac{1} {\overline{F}(t)} \int_t^\infty {f(x)}\log{f(x)} \,{\rm d}x
 = 1-\frac{1}{\overline{F}(t)}\int_t^\infty f(x)\log{\lambda(x)} \,{\rm d}x.
$$
The residual varentropy is  (cf.\ Di Crescenzo and Paolillo \cite{DiCrPaol2020}) 
\begin{equation} \label{eq:resvarentropy}
\begin{split}
V(X_t) &= {\rm Var}[IC(X_t)]
 = \int_t^\infty{\frac{f(x)}{\overline{F}(t)}\left(\log{\frac{f(x)}{\overline{F}(t)}}\right)^2 {\rm d}x} -[H(X_t)]^2 \\
&=\frac{1}{\overline{F}(t)}\int_t^\infty{f(x)\left[\log{f(x)}\right]^2 {\rm d}x} - \left[\Lambda(t)+H(X_t)\right]^2,
\end{split}
\end{equation}
where $\Lambda(t)$ and  $H(X_t)$ are given in (\ref{eq:Lambda}) and  (\ref{eq:resentropy}), respectively. See, also, Goodarzi {\em et al.}\ \cite{Goodarzi2016} and \cite{Goodarzi2017} 
for  various bounds to the variance of measures in dynamic settings. 
%

We finalize recalling the concept of unimodality which simplifies notably the computation of the distribution function of the transformation $f(X)$ as we will see later on. There are different ways to study the notion of unimodal density on
$\mathbb{R}$. We consider one of the most general ways based on the
following definition (see Sudhakar and Kumar (1988), page 2).
\begin{definition}
\label{unimodal}
A random variable $X$ or its distribution function $F$ is
called unimodal about a mode (or vertex) $m_F$ if $F$ is convex on
$(-\infty ,m_F)$ and concave on $(m_F,\infty )$.
\end{definition} 
A simple consequence of the Definition \ref{unimodal} is that if
$F$ is unimodal about $m_F$, then apart from a possible mass at
$m_F$, $F$ is absolutely continuous and if this is the case then
the unimodality of $F$ about $m_F$ is equivalent to the existence
of a density $f$, which is non-decreasing on $(-\infty ,m_F)$ and
non-increasing on $(m_F,\infty )$. Note that this density function
$f$ may be constant in a set of positive measure, or not be
continuous in a countable number of points on its support. The uniform, Cauchy, Weibull, Gamma, exponential and normal distributions are some examples that satisfies Definition \ref{unimodal}.

\begin{remark} \label{interv}
Let $X$ be a random variable under the regularity conditions having a unimodal cdf $F$. If $y \in(0,\infty)$ belongs to the image of the density function $f$, it is easily seen from Definition \ref{unimodal} that the set $\{x \in \mathbb{R}: f(x) > y \}$ is always an interval, where we will denote by $l_y$ and $u_y$ the lower and upper limit of that interval, respectively. It is also known that the mode, $m_F$, satisfies that $l_y\leq m_F \leq u_y$. Additionally, if $f$ is symmetric it is apparent that $l_y+u_y=2m_F$. 
\end{remark}

\begin{remark} \label{residunimod}
It is apparent from the Definition \ref{unimodal} and the expression \eqref{eq:survfunreslifetime} that the unimodality of the underlying random variable $X$ implies the unimodality of the residual life distribution $X_t$, for all $t\in S_X$. Additionally, the mode of $X_t$ is achieved at $0$ for all $t\geq m_F$.
\end{remark}
\begin{remark} \label{prop:quantreslifetime}
Let $X$ be a random variable under the regularity conditions having a cdf $F$. Given $u \in (0,1)$ and $t\in S_X$, it is straightforward to show that $F_t(x)$ also satisfies the regularity conditions and its inverse at $u$ is given by $F_t^{\,-1}(u)=F^{\,-1}(1-(1-u)\overline{F}(t))-t$.
\end{remark}

\section{Pdf-related distributions}\label{sec:pdfreldist}

Let us now define a special type of distribution, named ``pdf-related distribution''. This is defined by means of a transformation 
expressed by the pdf of a given baseline random variable.
\begin{definition}
\rm Let $X$ be a random variable under the regularity conditions having a pdf $f(x)$. Then, the random variable $f(X)$ is named as the pdf-related random variable of $X$. 
For any $y\in \mathrm{Im}^+(f)$ the distribution function of $f(X)$ is defined as
\begin{equation}
		K(y):=\mathbb{P}(f(X)\leq y).
		\label{ed:defK}
	\end{equation}
\end{definition}
\par
We first emphasize how $K(y)$ is affected by location and scale changes. Let $X$ be a random variable under the regularity conditions; a straightforward computation shows that 
\begin{equation} \label{Kaffine}
K_{aX+b}(y):= \mathbb{P}(f_{aX+b}(aX+b) \leq y)= K_{X}(|a|y), 
\end{equation}
where $a, b\in \mathbb{R}$, $a\neq 0$. In such a case it is clear that $|a|f_{aX+b}(aX+b)=_{st} f(X)$. It is also remarkable that $f_{-X}(-X)=_{st} f(X)$, therefore $X$ and $-X$ share the same pdf-related distribution.   
\par
Obtaining the distribution function $K(y)$ is a difficult exercise in practice. However, the assumption of unimodality considerably simplifies its computation. In this case, 
from Remark \ref{interv} we obtain that
\begin{equation}
   K(y): =1-\mathbb{P}(f(X) > y)=F(l_y)+\overline{F}(u_y),
\qquad y\in \mathrm{Im}^+(f).
 \label{eq:KyUnimod}
\end{equation}
\begin{example}
From  Eq.\ (\ref{eq:KyUnimod}) 
it is not hard to see that if $X$ has pdf  
$$
 f(x)=\left( 1-\left(1-\frac{1}{\alpha}\right)x\right)^{1/(\alpha-1)}, \qquad 0<x<\frac{\alpha}{\alpha-1}, 
$$
for $\alpha>1$, then $K(y)=y^\alpha$, for $0\leq y\leq 1$. 
Note that the case $\alpha \to 1$ corresponds to the exponential distribution, $X \sim {\rm Exp}(1)$. 
\end{example}
\par
It is worth pointing out that a pdf-related distribution is not necessarily continuous. 
Indeed, for instance, if $X$ is uniformly distributed on $(a,b)$, then clearly $f(X)$ is degenerate in $(b-a)^{-1}$. 
\par
Hereafter we obtain a characterization of the pdf-related distribution being $(0,1)$-uniform based on  exponential and Laplace distributions. It involves the lower and upper limits introduced in 
Remark \ref{interv}.  
\begin{proposition}
Let $X$ be a random variable under the regularity conditions and having a unimodal pdf $f(x)$ with mode $m_F\in \mathbb R$. Then, $f(X)$ is uniform on $(0,1)$ if and only if 
$$
 l_y-u_y=\ln(y), \quad \forall y  \in \mathrm{Im}^+(f).
$$
Additionally, (i) if $f$ is symmetric with support $\mathbb{R}$, then $f(x)=\exp(-2|x-m_F|)$, $x\in \mathbb{R}$,\\ 
(ii) if the support of $f$ is $[m_F,+\infty)$, then $f(x)=\exp(m_F-x)$, $x\in [m_F,+\infty)$, and finally,\\ 
(iii) if the support of $f$ is $(-\infty, m_F]$, then $f(x)=\exp(x-m_F)$, $x\in(-\infty, m_F]$.
\end{proposition}
\begin{proof} 
From Eq.\ \eqref{eq:KyUnimod} we know that $K(y) = F(l_y)+\overline{F}(u_y)$, for all $y\in \mathrm{Im}^+(f)$ and of course $f(m_F)=1$, $f(l_y)=f(u_y)=y$. Then 
\begin{eqnarray*}
f(X) \sim U(0,1) & \Longleftrightarrow &  K'(y)=1, \quad \forall y\in \mathrm{Im}^+(f) \\
 & \Longleftrightarrow & f(l_y) \frac{dl_y}{dy}-f(u_y)\frac{du_y}{dy}=1, \quad \forall y\in \mathrm{Im}^+(f) \\ 
 &  \Longleftrightarrow & 
\frac{dl_y}{dy}-\frac{du_y}{dy}=\frac{1}{y}, \quad \forall y\in \mathrm{Im}^+(f) \\ 
& \Longleftrightarrow & l_y-u_y=\ln(y),\quad \forall y  \in \mathrm{Im}^+(f).
\end{eqnarray*}
The rest of the proof follows by taking into account that, in the 3 cases, $l_y-u_y$ is given by $2(l_y-m_F)$, $m_F-u_y$ and $l_y-m_F$, respectively.
\end{proof}
\begin{example}
An example of non-symmetric pdf leading to the uniform distribution is provided by the following parametric density family 
$$
f(x) = \left\{ \begin{array}{lll} \exp\left(\displaystyle\frac{x-m_F}{1-\alpha}\right), & 
 x \leq m_F \\  
\\
\exp\left(\displaystyle\frac{-(x-m_F)}{\alpha}\right),  &  
x \geq m_F. \end{array} \right.   
$$
Indeed, in this case we have that $l_y-u_y=\ln(y)$, $\forall y  \in \mathrm{Im}^+(f)$, so that $f(X)$ is uniform on $(0,1)$, for all $\alpha \in (0,1)$. Note that $f(x)$ is symmetric 
only for $\alpha =0.5$. 
\end{example}
\par
In the following, when the pdf $f$ has support $S_X=(a,b)$, we adopt the following notation: 
$f(a)=\displaystyle\lim_{x\to a^+} f(x)$ and $f(b)=\displaystyle\lim_{x\to b^-} f(x)$. 
\par
As application of the construction of pdf-related distributions, 
hereafter we determine the cdf of the pdf-related random variable of the residual lifetime defined in (\ref{eq:Xt}), 
denoted as 
\begin{equation}\label{eq:defKty}
 K_t(y):=\mathbb{P}(f_t(X_t)\leq y)
 =\mathbb{P}(f(X)\leq y \overline{F}(t) \, | \, X>t),
\end{equation}
with the last identity due to (\ref{eq:pdfreslifetime}). 
\begin{proposition}\label{prop:distrfunrespdfreluni}
Let $X$ be an absolutely continuous random variable having support $S_X=(0,b)$.
\begin{itemize}
\item[(a)] If the pdf $f$ is continuous and strictly monotonic in $(t_0,b)$ for a given $t_0\in (0,b)$, 
then for all $t\in (t_0,b)$ one has 
\begin{equation}\label{eq:cdfKty2e3}
	K_t(y)
	=\left\{
	\begin{array}{lll}
	\displaystyle \frac{ \overline{F}(f^{\,-1}(y\overline{F}(t)))}{ \overline{F}(t)}, 
	& y\in \mathrm{Im}^+(f_t) = \left(\frac{f(b)}{\overline{F}(t)}, \lambda(t)\right), 
	& \hbox{if $f$ is decreasing,}
	\\[3mm]
	1- \displaystyle \frac{\overline F(f^{\,-1}(y\overline{F}(t)))}{ \overline{F}(t)}, 
	& y\in \mathrm{Im}^+(f_t) = \left(\lambda(t), \frac{f(b)}{\overline{F}(t)}\right),
	& \hbox{if $f$ is increasing,}
	\end{array}
	\right.
\end{equation}
where $f^{-1}$ denotes the inverse of the restriction to $(t_0,b)$ of $f$.
\item[(b)] Let the pdf $f$ be continuous in $S_X$, unimodal and symmetric, strictly increasing for $x \in (0,m]$ and strictly decreasing for $x \in [m, b)$, with $m=b/2$. Then,  for all $t\in (0,m]$ one has
\begin{equation}\label{eq:cdfKty4}
	\displaystyle K_t(y)=
	\left\{\begin{array}{ll}
	\displaystyle {\frac{  {F}( f^{\,-1}(y\overline{F}(t)))}{ \overline{F}(t)}}, \ 
		& y\in \left(\frac{f(0)}{\overline{F}(t)}, \lambda(t)\right)
		\\[+4mm]
	\displaystyle \frac{2 {F}(f^{\,-1}(y\overline{F}(t)))- F (t)}{ \overline{F}(t)}, \   
	& y\in \left[\lambda(t), \frac{f(m)}{\overline{F}(t)}\right)
		\end{array}
	\right.
	\end{equation}
where $f^{-1}$ is the inverse of the restriction to $(0,m]$ of $f$.
\end{itemize}
\end{proposition}
\begin{proof}
In the case (a), if $f$ is decreasing, from (\ref{eq:defKty}) for all $t\in (t_0,b)$ and $y\in \mathrm{Im}^+(f_t)$ 
we have $K_t(y)=\mathbb{P}(X\geq f^{\,-1}(y\overline{F}(t)) \, | \, X>t)$. 
Eq.\ (\ref{eq:cdfKty2e3}) then follows straightforwardly. 
When $f$ is increasing the procedure is analogous. 
In the case (b), when $y\in \left(\frac{f(0)}{\overline{F}(t)}, \lambda(t)\right)$ the function $K_t(y)$ can be obtained 
as in case (a), whereas when $y\in \left[\lambda(t), \frac{f(m)}{\overline{F}(t)}\right)$ we have 
$$
	K_t(y)=\mathbb{P}(t<X\leq f^{\,-1}(y\overline{F}(t)) \,| \, X>t)
	+\mathbb{P}(b-f^{-1}(y\overline{F}(t)) \leq X\leq b \,| \, X>t).
$$
The expression given in  (\ref{eq:cdfKty4}) thus follows by noting that 
$\overline{F}(b-x)=F(x)$ for all $0\leq x\leq b$. 
\end{proof}
\par
Clearly, the cdf given in (\ref{eq:cdfKty4})  is continuous in $y=\lambda(t)$, with 
$$
 K_t(\lambda(t))=\frac{F(t)}{\overline{F}(t)}, \qquad t\in (0,m],
$$  
where  the right-hand-side is the odds function of $X$ evaluated at $t\leq m$. 
\par
We conclude this section with an example concerning the distribution treated in Proposition \ref{prop:distrfunrespdfreluni}. 
\begin{example}\label{ex:hyprv}
	Let $X$ be a Pareto-type random variable having pdf and cdf given respectively by 
$$
	f(x)=\frac{1}{(1+x)^2}, \qquad x\in(0,+\infty), \qquad F(x)=\frac{x}{1+x}, \qquad x\in[0,+\infty).
$$
Since $f$ is strictly decreasing for all $x\in(0,+\infty)$, with inverse
$f^{-\,1}(y)=y^{-1/2}-1$, $ y\in(0,1)$, from (\ref{eq:cdfKty2e3}) we have that, for all $t>0$
$$
	K_t(y) 
	=\left(y(1+t)\right)^{1/2},  
	\qquad y\in\Big(0,\frac{1}{1+t}\Big).
$$
\end{example}
%

\section{Results based on stochastic orders} \label{sec:stocorders}

With the goal of comparing information measures as the ones given by the differential entropy and the varentropy, we next recall some classical results involving stochastic orders. Roughly speaking, stochastic orders deals about the comparison of two random quantities and can be divided in three kind of comparisons: magnitude, variability and shape. Let $X$ and $Y$ be two random variables having cdf's $F(x)$ and $G(x)$ and pdf's $f(x)$ and $g(x)$, respectively. The following definitions will be relevant.
\par
(i) According to the magnitude, we will say that $X$ is said to be smaller than $Y$ in the usual stochastic order, denoted by $X \leq_{st} Y$, if ${F}(x) \geq {G}(x)$, for all $x\in \mathbb R$. In some sense, the previous inequality says that $X$ is less likely than $Y$ to take on large values. The usual stochastic order deals with a characterization in terms of the expectations of increasing transformations, i.e., $X\leq_{st}Y$ if and only if 
\begin{equation}
\label{eq1.A.7}
E[g(X)] \leq E[g(Y)]
\end{equation}
holds for all increasing functions $g$ for which the expectations exist. In particular, just considering $g(x)=x$, the usual stochastic order implies the comparison of the means.
\par
(ii) According to the variability, we will say that $X$ is smaller than $Y$ in the dispersive order, denoted by $X\leq_{disp} Y$, if 
$F^{\,-1}(v)-F^{\,-1}(u) \leq G^{\,-1}(v)-G^{\,-1}(u)$, for all $0< u \leq v < 1$. The dispersive order has a  characterization in terms of the density functions evaluated at quantiles, i.e., $X\leq_{disp} Y$, if and only if
\begin{equation} \label{disp}
f(F^{\,-1}(u)) \geq g(G^{\,-1}(u)),   \qquad \textrm{ for all 
} u\in(0,1). 
\end{equation}
\par
(iii) According to the shape, we will say that $X$ is smaller than $Y$ in the convex transform order, denoted by $X\leq_c Y$, if 
$G^{-1}(F(x))$ is convex in $x$ on the support of $F(x)$. The convex transform order can be also characterized in terms of the density functions evaluated at quantiles, i.e.,  $X\leq_c Y$,  if and only if
\begin{equation} \label{convex}
\frac{f(F^{\,-1}(p))}{g(G^{\,-1}(p))}, \quad \textrm{is increasing in \ } p\in(0,1).
\end{equation}
\par
(iv) According to the shape, if $X$ and $Y$ are non-negative random variables we will say that $X$ is smaller than $Y$ in the star order, denoted by $X\leq_{*} Y$, if 
\begin{equation} \label{star}
\frac{G^{-1}(p)}{F^{-1}(p)}, \quad \textrm{is increasing in } p\in (0,1) \quad  \Longleftrightarrow \quad \log(X) \leq_{disp} \log(Y).
\end{equation}
\par
(v) According to the shape, if $X$ and $Y$ have symmetric density functions, we will say that $X$ is smaller than $Y$ in the kurtosis order, denoted by $X\leq_k Y$, if $G^{\,-1}(F(x))$ is concave for all $x<Me(X)$, or, equivalently, $G^{\,-1} (F(x))$ is convex for all $x>Me(X)$, where $Me(X)=F^{\,-1}(0.5)$ represents the median of $X$. It is pointed out in Oja \cite{Oja1983} that the kurtosis order can be characterized in terms of the convex transform order, i.e., $X\leq_k Y$ if and only if 
\begin{equation} \label{kurtosisconvtr}
	|X-Me(X)|\leq_c |Y-Me(Y)|.
\end{equation}

\par
For a comprehensive review on the background of stochastic orders we refer the reader to Shaked and Shantikumar \cite{ShakShant2007}. 
\par
To conclude the exposition of the stochastic orders we recall the notion of rearrangement of a function which has played a key role in many inequalities in literature. It is described in the seminal book of Hardy {\em et al.}\ \cite{Hardy1934} and has been studied in the context of entropy, randomness, majorization and dispersion in Hickey \cite{Hickey1984}, \cite{Hickey1986} and Fern\'andez-Ponce and Su\'arez-Llorens \cite{FPonce2003}, among other authors. We next recall the decreasing rearrangement of a density function and some of its important
properties. 
\begin{definition}\label{def:rearrdensfun}
Let $X$ be an absolutely continuous random variable having a pdf $f(x)$. The  decreasing rearrangement of $f(x)$, denoted by $f^*(x)$, is given by 
$$
f^*(x)=\sup\{c: m(c)>x \}, \qquad x>0,
$$
where $m(c)=\mu \{t: f(t)>c\}$, $\mu$ denoting the Lebesgue measure.
\end{definition}
From Hardy {\em et al.}\ \cite{Hardy1929}, we know that the decreasing rearrangement satisfies the following identity 
$$ 
\int_0^\infty f^*(x) \,\mathrm{d}x = \int_{-\infty}^\infty f(x) \,\mathrm{d}x =1.
$$
Then $f^*$ can be considered as a pdf of a particular random variable that we will denote by $X^*$. 
\par
The following implication is well-known (see Hardy {\em et al.} \cite{Hardy1929}, Theorems 9 and 10). Let $X$ and $Y$ be absolutely continuous random variables having pdf $f$ and $g$, respectively. Then 
\begin{equation} \label{major}
X^* \leq_{st} Y^* \quad \iff \quad  \int_{-\infty}^{\infty} u(f(x))\,\mathrm{d}x \leq \int_{-\infty}^{\infty} u(g(x))\,\mathrm{d}x,
\end{equation}
for all continuous and concave functions $u$. It is worth mentioning that $X^* \leq_{st} Y^*$ was equivalently described as $\int_{0}^{t} f^*(x) \,\mathrm{d}x \geq \int_{0}^{t} g^*(x) \,\mathrm{d}x$ for all $t>0$ in Hickey \cite{Hickey1984} where it was interpreted as continuous majorisation.
\par
The ordering $X^* \leq_{st} Y^*$ implies a particular comparison between $f(X)$ and $g(Y)$. Rewriting the right side of expression \eqref{major}, we obtain that $X^* \leq_{st} Y^*$ is equivalent to  
\begin{equation}\label{chacrear}
 \mathbb E\left[\frac{u(f(X))}{f(X)}\right] \leq 
 \mathbb E\left[\frac{u(g(Y))}{g(Y)}\right],
\end{equation}
for all  continuous and concave functions $u$. The latter relation  provides a class of measures of entropy. Indeed, as it is interpreted in Hickey \cite{Hickey1984}, just taking $u(x)=-x\log (x)$, one has that if $X^* \leq_{st} Y^*$ then $H(X) \leq H(Y)$ follows directly from the definition of the differential entropy given in (\ref{eq:defHX}). 
To finalize, the following remarks are straightforward and they will be useful later on. 
\begin{remark} \label{rm:frearrstoceq}
If $X$ has a symmetric and unimodal pdf $f(x)$, then $X^*=_{st} 2 |X-Me(X)|$. 
\end{remark}
\begin{remark} \label{rm:rearrstrdec}
If $X$ has a strictly decreasing pdf in the interval  $(0,+\infty)$, then $f^*(x)=f(x)$ for all $x\in (0,+\infty)$, i.e., \ $X^*=_{st} X$.
\end{remark}
\par
At this point we wonder about what kind of comparisons are we interested in. Keeping in mind that we focus on the comparison of information measures, it seems logical that we are looking for comparisons between two pdf-related random variables $f(X)$ and $g(Y)$ that lead to the comparison of the differential entropy and varentropy. For example, from the previous stochastic order definitions we have the following results.
\begin{lemma}\label{cr:stocorderentr}
Let $X$ and $Y$ be two random variables under the regularity conditions having pdf's  $f(x)$ and $g(x)$, respectively, 
and having finite differential entropy. If $f(X)\leq_{st} g(Y)$, then $$H(X)\geq H(Y).$$
\end{lemma}
\begin{proof}
From the hypothesis assumption, the expression of the differential entropy given in \eqref{eq:defHX} and considering $g(x)=\log(x)$, the proof follows easily just using \eqref{eq1.A.7}.
\end{proof}
\begin{lemma}\label{cr:starordervarentr}
Let $X$ and $Y$ be two random variables under the regularity conditios having pdf's  $f(x)$ and $g(x)$, respectively, 
and having finite differential varentropy. If  $f(X)\leq_* g(Y)$, then $$V(X)\leq V(Y).$$
\end{lemma}
\begin{proof}
From expression \eqref{star}, $f(X)\leq_* g(Y)$ is equivalent to 
$\log{f(X)} \leq_{disp} \log{g(Y)}$ which implies that $\mathrm{Var}[\log{f(X)}]\leq \mathrm{Var}[\log{g(Y)}]$
(see, for instance, Section  3.B.2 of \cite{ShakShant2007}). The 
assertion then follows from the definition of the varentropy given in (\ref{eq:defvarentropy}).
\end{proof}
\par
A natural question is whether a certain stochastic order between $X$ and $Y$ implies a stochastic comparison between $f(X)$ and $g(Y)$, which in turn leads to the comparison of the information measures. The following two examples deserve to be highlighted. 
\begin{example} \label{rearrand}
The usual stochastic ordering between the rearrangements $X^*$ and $Y^*$, $X^*\leq_{st} Y^*$, implies the comparison of $f(X)$ and $g(Y)$ provided in \eqref{chacrear}, which in turn implies  the comparison of the differential entropy, $H(X) \leq H(Y)$.
\end{example}  
\begin{example} \label{dispstarex}
From \eqref{disp}, if  $X \leq_{disp} Y$ then $ f(F^{-1}(u)) \geq g(G^{-1}(u)) $ for all $u\in (0,1)$. Just considering the inverse probability integral transformation and Theorem 1.A.1 in \cite{ShakShant2007}, if $X \leq_{disp} Y$ holds then $f(X)\geq_{st} g(Y)$ and using Lemma \ref{cr:stocorderentr} we obtain that $H(X) \leq H(Y)$. For example, it is well known that if $X$ is IFR, then $X_{t_2}\leq_{disp} X_{t_1}$ for all $t_1 \leq t_2$,  see Belzunce {\em et al.}\ \cite{beletal1996} and Pellerey and Shaked \cite{pell1996}. From the previous arguments it is apparent that if $X$ is IFR then $H(X_t)$ is decreasing when $t$ increases. This result is well known in the literature, see Ebrahimi \cite{Ebr1996}. A similar result holds for DFR distributions by exchanging all inequalities and replacing decreasing for increasing in the residual differential entropy.
\end{example}

We finalize this section with a  result concerning with the comparison of affine transformations.
\begin{proposition}\label{pr:starorderlintr} 
Let $X$ be a random variable under the regularity conditions with pdf $f(x)$, and let $Y=aX+b$, $a, b\in\mathbb{R}$, $a\neq 0$, having density $g$. Then, $f(X) =_*g(Y)$ and $f(X) \geq_{st} (\leq_{st}) g(Y)$ if $|a| \geq (\leq) 1$. Therefore $V(X) = V(Y)$ and $H(X) \leq (\geq) H(Y)$ if $|a| \geq (\leq) 1$.
\end{proposition}
\begin{proof}
By using \eqref{Kaffine}, we easily obtain that $|a| K^{-1}_{Y} (u) = K_{X}^{-1}(p)$. Then, $f(X) =_*g(Y)$ follows directly by \eqref{star}. Also using \eqref{Kaffine}, we obtain that $K_{Y} (y) = K_{X}(|a| y) \geq (\leq) K_{X}(y)$ if $|a| \geq (\leq) 1$. Therefore, $f(X) \geq_{st} (\leq_{st}) g(Y)$ if $|a| \geq (\leq) 1$ just using the definition of the stochastic order. The rest of the proof is a direct consequence of Lemma \ref{cr:stocorderentr} and Lemma \ref{cr:starordervarentr}.
\end{proof}
We would like to emphasize that the relation $f(X) =_*g(Y)$ implies that both pdf-related distribution functions have proportional quantile functions. In such a case, they belong to the same scale family of distributions (see, for instance, Section 4.1 of Di Crescenzo {\em et al.}\ \cite{DiCr2016}).   

\section{Differential entropy, varentropy and stochastic orders} \label{secstvarent}
\par
As we showed in Example \ref{dispstarex}, the dispersive order, 
$\leq_{disp}$, between the underlying random variables $X$ and $Y$ implies the usual stochastic order, $\leq_{st}$, between the pdf-related distributions, $f(X)$ and $g(Y)$, which in turn provides the comparison of the differential entropy as discussed in Lemma \ref{cr:stocorderentr}. 
Additionally, we also showed in Example \ref{rearrand} that the usual stochastic order, 
$\leq_{st}$, between the rearrangements $X^*$ and $Y^*$ of the underlying random variables $X$ and $Y$ implies a particular randomness order, see expression \eqref{chacrear}, between the pdf-related distributions which in turn also provides the comparison of the differential entropy. In this section, we are interested in the comparison of the differential varentropy, i.e., we will provide the following type of results: 
\begin{equation} \label{niceresult}
X \leq_{(1)} Y \quad \Longrightarrow \quad f(X) \leq_{*} g(Y) \quad \Longrightarrow \quad V(X) \leq V(Y)     
\end{equation}
where $\leq_{(1)}$ is any particular stochastic order and the second implication follows directly from Lemma \ref{cr:starordervarentr}. This type of results helps us to provide many possible examples to compare the differential varentropy.

As we mentioned, the rearrangement captures in some sense the degree of randomness in a density function and it has a close connection with the differential entropy. Now we wonder if the rearrangement is also related with the concept of varentropy. From now on, we will assume that the density functions have no flat zones or, equivalently, the rearrangements are strictly decreasing. Next result is of the \eqref{niceresult} type.
\begin{theorem}\label{th:pdfrelrearr}
Let $X$ and $Y$ be two random variables under the regularity conditions having pdf's $f$ and $g$, respectively, with no flat zones. Then
	$$
  X^* \leq_c Y^* \quad \iff \quad 	f(X)\leq_* g(Y).
	$$
\end{theorem}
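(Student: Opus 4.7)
The plan is to reduce the comparison of $f(X)$ and $g(Y)$ to the case of strictly decreasing densities via rearrangement, and then match the characterization of the star order (Theorem \ref{th:projmethdisporderpdfrel}) with the density-ratio characterization of the convex transform order (Remark \ref{rem:convtransforder}).

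First, I would establish the key equimeasurability identity $f(X)=_{st}f^*(X^*)$, and likewise $g(Y)=_{st}g^*(Y^*)$. This follows from the fact that $\mu\{f>t\}=\mu\{f^*>t\}$ for every $t>0$, which by layer-cake decomposition gives
$$
\mathbb{P}(f(X)\le y)=\int_{\{f\le y\}}f(x)\,\mathrm{d}x=\int_0^y\bigl[\mu\{f>t\}-\mu\{f>y\}\bigr]\mathrm{d}t=\mathbb{P}(f^*(X^*)\le y),
$$
where the no-flat-zones assumption guarantees $\mu\{f=y\}=0$ so the inequality signs are handled correctly. Hence the cdfs $K$ and $S$ from \eqref{eq:KandS} coincide with those of $f^*(X^*)$ and $g^*(Y^*)$ respectively.

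Second, since $f^*$ is strictly decreasing on its support under the standing hypothesis, Proposition \ref{prop:decrpdfreldistr}(a) applies to $X^*$ and yields $K(y)=\overline{F^*}(f^{*\,-1}(y))$, where $F^*$ is the cdf of $X^*$; likewise $S(y)=\overline{G^*}(g^{*\,-1}(y))$. I would then compute the mapping function $\phi=S^{-1}\circ K$ explicitly by setting $p=K(x)$: this forces $x=f^*(F^{*\,-1}(1-p))$ and $\phi(x)=g^*(G^{*\,-1}(1-p))$. Reparametrising with $u=1-p\in(0,1)$ gives the clean pair
$$
x(u)=f^*(F^{*\,-1}(u)),\qquad \phi(x(u))=g^*(G^{*\,-1}(u)),
$$
and $x(u)$ is strictly decreasing in $u$ because $f^*$ is decreasing and $F^{*\,-1}$ is increasing.

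Third, I would tie the two characterizations together. By Theorem \ref{th:projmethdisporderpdfrel}, $f(X)\le_* g(Y)$ is equivalent to $\phi(x)/x$ being increasing in $x$; via the parametrisation, since $u\mapsto x(u)$ is strictly decreasing, this is equivalent to
$$
\frac{g^*(G^{*\,-1}(u))}{f^*(F^{*\,-1}(u))} \quad\text{being decreasing in }u\in(0,1),
$$
or equivalently $f^*(F^{*\,-1}(u))/g^*(G^{*\,-1}(u))$ increasing in $u$. By Remark \ref{rem:convtransforder}, the latter is exactly the characterisation of $X^*\le_c Y^*$, which closes the equivalence in both directions.

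The main obstacle will be the first step: writing the equimeasurability argument carefully enough that the inverses $f^{*\,-1}$, $g^{*\,-1}$ really make sense on the correct images (requiring the no-flat-zones hypothesis to produce strict monotonicity of $f^*$ and $g^*$), and tracking that all the compositions and monotonicity directions align so that ``increasing in $x$'' on one side corresponds to ``increasing in $u$'' on the other. Once the reparametrisation $u=1-K(x)$ is in place, the rest is essentially a dictionary translation between Theorem \ref{th:projmethdisporderpdfrel} and Remark \ref{rem:convtransforder}.
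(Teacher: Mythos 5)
Your proposal is correct and follows essentially the same route as the paper: both reduce to the identity $F^{-1}_{f(X)}(p)=f^*\left(F_{X^*}^{-1}(1-p)\right)$ (you obtain it via the layer-cake equimeasurability argument plus Proposition \ref{prop:decrpdfreldistr}(a), the paper via Hickey's formula $F_{X^*}(t)=\mathbb{P}(f(X)>f^*(t))$) and then match a quantile characterization of the star order against Remark \ref{rem:convtransforder}. The only cosmetic difference is that you use the $\phi(x)/x$ form of the star order from Theorem \ref{th:projmethdisporderpdfrel} with the reparametrisation $u=1-K(x)$, whereas the paper applies the equivalent quantile-ratio criterion of Remark \ref{rem:charactstarorder} directly.
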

\begin{proof}
From Eq.\ (2.1) of Hickey \cite{Hickey1984}, 
the distribution function of $X^*$ can be expressed as  
$F_{X^*}(t) =\mathbb{P}(f(X)>f^*(t))$.
The use of strict inequality in the set $\{ x\in \mathbb R: f(x) > f^*(x) \}$ follows from the assumption of $f$ having no flat zones. Therefore,  one has 
$F_{f(X)}\left(f^*(t)\right)=1-F_{X^*}(t)$, $ t\in \mathbb{R}$. 
	Because $f^*$ is strictly decreasing, we have  
\begin{equation} \label{curiosity}
		F^{\,-1}_{f(X)}(p)=f^*\left(F_{X^*}^{-1}(1-p)\right), \qquad 0<p<1.
\end{equation}

Then, recalling \eqref{star} the   relation 
$f(X)\leq_* g(Y)$ is satisfied if and only if
$$
		\frac{F^{\,-1}_{g(Y)}(p)}{F^{\,-1}_{f(X)}(p)}
		=\frac{g^*\left( G_{Y^*}^{-1}(1-p)\right)}{f^*\left(F_{X^*}^{-1}(1-p)\right)} 
	 \qquad \textrm{is increasing in } p\in(0,1),
$$
which is equivalent to $ X^* \leq_c Y^*$ by virtue of \eqref{convex}.

\end{proof}

\begin{remark}
From expression \eqref{curiosity} and taking into account the inverse probability integral transform, we obtain that $f(X)=_{st} f^{*}(X^*)$. Therefore, it is clear that random variables having the same rearrangements have also the same differential entropy and varentropy. Here we provide an illustrative example. Let $X$ and $Y$ be two random variables taking values in $[-1,1]$, with density functions $f$ and $g$ given by
$$
 f(x)=1-|x|, \quad -1\leq x\leq 1, 
 \qquad
 g(x)=|x|, \quad -1\leq x\leq 1,
$$
respectively. It is easy to compute that
$$
m_{f}(c)=m_{g}(c)=\mu \left\{ x:g(x)>c \right\}= 2(1-c)I_{[0,1]}(c).
$$
Then $f^{*}(x)= g^{*}(x)=(1- x/2)I_{[0,2)}(x)$. Therefore, we have that $f^*(X^*)=_{st}g^*(Y^*)$ and thus we  conclude that $H(X)=H(Y)$ and $V(X) = V(Y)$. It is worth mentioning that $f$ is unimodal but $g$ is not.
\end{remark}


%
In practice, Theorem \ref{th:pdfrelrearr} provides many possible comparisons as we show in the following results.

\begin{corollary} \label{cr:kurtosisconvtr}
	Let $X$ and $Y$ be two symmetric unimodal random variables under the regularity conditions. Then, 
$$
	X\leq_k Y  \quad \mbox{ if and only if } \quad f(X)\leq_{*} g(Y).
$$ 
\end{corollary}
\begin{proof}
Using jointly expression \eqref{kurtosisconvtr} and Remark \ref{rm:frearrstoceq} we obtain that $X\leq_k Y$ if and only if $X^* \leq_{c} Y^*$, due to the well-known fact that the transform convex order does not depend on scale changes. The result follows easily just applying Theorem \ref{th:pdfrelrearr}. 
\end{proof}

Corollary \ref{cr:kurtosisconvtr} provides many possible comparisons in terms of the differential varentropy. We find in Arriaza {\em et al.}\ \cite{Arriaza2019} a compilation of many unimodal symmetric distributions that are ordered in the kurtosis sense. For example, we know that the classical normal, logistic and Cauchy distributions satisfy that  normal $\leq_{k}$ logistic $\leq_{k}$ Cauchy. Therefore, just applying Corollary \ref{cr:kurtosisconvtr} and Lemma \ref{cr:starordervarentr} we obtain that the differential varentropy are ordered, i.e., 
$$V(\mbox{normal}) \leq V(\mbox{logistic}) \leq V(\mbox{Cauchy}).$$

\begin{corollary}\label{cr:convtrasfstar}
Let $X$ and $Y$ be two random variables under the regularity conditions having decreasing density functions with modes $m_X$ and $m_Y$, respectively. Then, $$X\leq_{c} Y \quad \mbox{ if and only if } \quad f(X) \leq_{*} g(Y).$$ 
\end{corollary}
\begin{proof}
Since changes on location do not affect the transform convex order and using \eqref{Kaffine} we can consider $m_x=m_y=0$  without loss of generality. Using Remark \ref{rm:rearrstrdec} we obtain that $X\leq_c Y$ if and only if $X^* \leq_{c} Y^*$. The result follows easily just applying Theorem \ref{th:pdfrelrearr}.
\end{proof}
Corollary \ref{cr:convtrasfstar} also provides many possible comparisons in terms of the differential varentropy as we will see in the following results. 
\par
\begin{theorem} \label{monotvaren}
Let $X$ be a nonnegative absolutely continuous random lifetime with support $S_X$ having a cdf $F$ and a pdf $f(x)$. Let $X_t$ be its 
residual lifetime at time $t$, as defined in (\ref{eq:Xt}). Let us suppose that there exists $t_0\in S_X$ such the pdf of $X_t$, defined in  (\ref{eq:pdfreslifetime}), is strictly decreasing  $\forall t\geq t_0$. If the ratio 
$$
\frac{f(F^{-1}(1-(1-p)u))}{f(F^{-1}(1-(1-p)v))}
$$
increases (decreases) in $p\in (0,1)$, $\forall \,0< v<u<1$, then $V(X_t)$ increases (decreases) in $t\geq t_0$. 
\end{theorem}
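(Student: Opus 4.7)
The plan is to reduce the monotonicity of $V(X_t)$ in $t$ to a star-order comparison between the pdf-related residual random variables $f_{t_1}(X_{t_1})$ and $f_{t_2}(X_{t_2})$, and then to check that order through the explicit quantile formula for $K_t$ combined with the hypothesized monotonicity.

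First, fix $t_0\le t_1<t_2$ in $S_X$. By Theorem \ref{th:starordervarentr}, in order to derive $V(X_{t_1})\le V(X_{t_2})$ it suffices to prove $f_{t_1}(X_{t_1})\le_* f_{t_2}(X_{t_2})$. The assumption that $f_t$ is strictly decreasing for every $t\ge t_0$ amounts to $f$ being strictly decreasing on $(t_0,b)$, so Proposition \ref{prop:distrfunrespdfreluni}(a) applies at both $t_1$ and $t_2$ and, by direct inversion of the expression for $K_t$ given there, furnishes
\[
K_t^{-1}(\alpha)=\frac{1}{\overline{F}(t)}\,f\bigl(F^{\,-1}(1-\alpha\,\overline{F}(t))\bigr),\qquad \alpha\in(0,1).
\]

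By the quantile characterization of the star order recalled in Remark \ref{rem:charactstarorder}, $f_{t_1}(X_{t_1})\le_* f_{t_2}(X_{t_2})$ is equivalent to the increase in $\alpha\in(0,1)$ of
\[
\frac{K_{t_2}^{-1}(\alpha)}{K_{t_1}^{-1}(\alpha)}=\frac{\overline{F}(t_1)}{\overline{F}(t_2)}\cdot\frac{f\bigl(F^{\,-1}(1-\alpha\,\overline{F}(t_2))\bigr)}{f\bigl(F^{\,-1}(1-\alpha\,\overline{F}(t_1))\bigr)}.
\]
Since the prefactor is a positive constant, it is enough to show that the quotient on the right is increasing in $\alpha$. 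Setting $u=\overline{F}(t_1)$, $v=\overline{F}(t_2)$ (so that $0<v<u<1$) and $p=1-\alpha$, this quotient equals the reciprocal of $f(F^{\,-1}(1-(1-p)u))/f(F^{\,-1}(1-(1-p)v))$, which is exactly the ratio singled out in the hypothesis. In the ``increasing'' branch, the hypothesis says this ratio grows with $p$, hence decreases with $\alpha$, so its reciprocal grows with $\alpha$; the star order follows, and Theorem \ref{th:starordervarentr} yields $V(X_{t_1})\le V(X_{t_2})$. The ``decreasing'' branch is handled by reversing every inequality in the chain above.

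The main technical point I expect is keeping straight the two orientation reversals—the substitution $p=1-\alpha$ and the numerator/denominator swap introduced by the choice $u=\overline{F}(t_1)>v=\overline{F}(t_2)$—so that the monotonicity hypothesis on the displayed ratio lands precisely as the star-order inequality demanded by Theorem \ref{th:starordervarentr}. Once this bookkeeping is settled, the remainder of the proof consists solely of direct invocations of Proposition \ref{prop:distrfunrespdfreluni}(a), Remark \ref{rem:charactstarorder}, and Theorem \ref{th:starordervarentr}.
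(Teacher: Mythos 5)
Your proof is correct, and the bookkeeping of the two orientation reversals works out exactly as you describe: with $u=\overline{F}(t_1)>v=\overline{F}(t_2)$ and $\alpha=1-p$, the hypothesized monotonicity of the ratio translates precisely into the monotonicity of $K_{t_2}^{-1}(\alpha)/K_{t_1}^{-1}(\alpha)$, giving $f_{t_1}(X_{t_1})\le_* f_{t_2}(X_{t_2})$ and hence the varentropy inequality via Theorem \ref{th:starordervarentr}. The paper reaches the same conclusion through a slightly different decomposition: it first shows $X_{t_1}\le_c X_{t_2}$ using the characterization of the convex transform order in Remark \ref{rem:convtransforder} together with Proposition \ref{prop:quantreslifetime}, and then invokes Proposition \ref{propvaren} (which, for strictly decreasing densities, packages the rearrangement identity $X^*=_{st}X$, Theorem \ref{th:pdfrelrearr} and Theorem \ref{th:starordervarentr}). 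The two routes are mathematically equivalent --- the ratio you compute is the reciprocal of the ratio $f_{t_1}(F_{t_1}^{-1}(p))/f_{t_2}(F_{t_2}^{-1}(p))$ appearing in Remark \ref{rem:convtransforder}, consistent with Eq.\ (\ref{curiosity}) --- but yours is more self-contained in that it verifies the star order directly from the explicit quantile formula of Proposition \ref{prop:distrfunrespdfreluni}(a) (equivalently Proposition \ref{prop:Kinv}) without passing through the convex transform order of the residual lifetimes, whereas the paper's version makes the intermediate aging-type comparison $X_{t_1}\le_c X_{t_2}$ explicit, which is of independent interest.
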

\begin{proof}
Given $t_1$ and $t_2$ such that $t_0<t_1<t_2$ and using first Corollary \ref{cr:convtrasfstar} and then Lemma \ref{cr:starordervarentr} we just need to check that $X_{t_1} \leq_{c} (\geq_{c}) X_{t_2}$.  From the characterization of the convex transfor order given in \eqref{convex}  and from 
the expressions of the pdf of the residual life  given in \eqref{eq:pdfreslifetime} and the inverse of the distribution function of the residual life given in Remark \ref{prop:quantreslifetime}, we obtain that
$$
 X_{t_1} \leq_{c} (\geq_{c}) X_{t_2} \; \iff \;  \frac{f(F^{-1}(1-(1-p)\overline{F}(t_1)))}{f(F^{-1}(1-(1-p)\overline{F}(t_2)))}  \quad \textrm{is increasing (decreasing) in } p\in(0,1).
$$
The result follows from the hypothesis  just considering $v=\overline{F}(t_2)$ and $u= \overline{F}(t_1)$. 
\end{proof}

\par
\begin{example} \label{exweibull}
Let $X\sim Weibull(k, \lambda)$ be a Weibull distribution with shape parameter $k$ and scale parameter $\lambda$. From the expression of the cdf of $X$ given by $F(x)=1-\exp(-(x/\lambda)^{k})$, $x\geq 0$, it is a straightforward matter to compute the ratio   
$$
\frac{f(F^{-1}(1-(1-p)u))}{f(F^{-1}(1-(1-p)v))} = \frac{u}{v} \left (\frac{\ln((1-p)u)}{\ln((1-p)v)} \right)^{\frac{k-1}{k}} 
$$
for all $0< v<u<1$. It follows easily that the above ratio is increasing (decreasing) when $k > (<) 1$ and it is constant for $k =1$. It is well-known that Weibull distributions are always unimodal having decreasing density functions after the mode. Then, the pdf's of the residual lives after the mode satisfy the conditions of Theorem \ref{monotvaren}, see Remark \ref{residunimod}. Then, $V(X_t)$ increases (decreases) $\forall t \geq$ mode, for $k > (<) 1$. On the other hand, the Weibull distribution is IFR (DFR) for $k >(<)1$. Then, just using Example \ref{dispstarex} we obtain that the differential entropy and the varentropy of the residual lives have  opposite directions of growth, namely, if $k>1$ we obtain that $V(X_t)$ increases and $H(X_t)$ decreases, and the opposite for $k<1$. The case $k =1$ is just the exponential distribution where the residual lives have both differential entropy and varentropy constant. Some examples of the varentropy of the residual lives in the Weibull distributions are shown computationally in \cite{DiCrPaol2020}. 
\end{example}
We recall that in Theorem 3.4 of \cite{DiCrPaol2020} it is proved that if a random lifetime $X$ is ILR, 
i.e.\ its pdf is log-concave, then the residual varentropy  (\ref{eq:resvarentropy}) is such that 
$V(X_t)\leq 1$ for all $t$ in the support of $X$. The condition of log-concave density means that the pdf belongs to the class of strong unimodal densities. Hereafter we show a similar result making use of 
the above findings. 
\begin{theorem} \label{ifrvarem}
Let $X$ be a nonnegative random lifetime under the regular conditions with support $S_X$, and let $X_t$ be its 
residual lifetime at time $t$ as defined in (\ref{eq:Xt}). If, for a given $t\in S_X$, $X_t$ is IFR and 
its pdf (\ref{eq:pdfreslifetime}) is strictly decreasing, then $ V(X_t)\leq 1$.
\end{theorem}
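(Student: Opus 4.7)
The plan is to apply Proposition \ref{propvaren} with a suitably chosen exponential reference. The idea is that IFR is characterized by the convex transform order against an exponential, and the exponential has varentropy equal to $1$; so whenever we may pass from $\leq_c$ to the inequality between varentropies via Proposition \ref{propvaren}, we obtain the desired bound.

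First I would fix any rate $\lambda>0$ and consider $Y\sim\mathrm{Exp}(\lambda)$. A direct computation yields $-\log g(Y)=-\log\lambda+\lambda Y$, hence
$$
V(Y)=\mathrm{Var}\!\left[-\log g(Y)\right]=\lambda^{2}\,\mathrm{Var}(Y)=1.
$$
Next I would recall the well-known equivalence: a nonnegative random variable is IFR if and only if it is smaller, in the convex transform order, than an exponential random variable. Indeed, taking $G(y)=1-e^{-\lambda y}$, one has $G^{-1}(F_t(x))=\Lambda_t(x)/\lambda$, which is convex in $x$ precisely when the hazard rate $\lambda_t(x)=f_t(x)/\overline{F}_t(x)$ is increasing, i.e.\ when $X_t$ is IFR. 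Thus the hypothesis of the theorem gives $X_t\leq_c Y$.

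Now both $X_t$ and $Y$ have strictly decreasing pdfs on a half-line (by assumption for $X_t$, and immediately for the exponential), so the hypotheses of Proposition \ref{propvaren} are met, and we conclude
$$
V(X_t)\leq V(Y)=1.
$$
The only potentially delicate point is the equivalence between IFR and $X_t\leq_c\mathrm{Exp}$; but this is classical and already encoded by the fact that an exponential is the ``boundary'' of both the IFR and DFR classes in the convex transform order. Everything else is a direct invocation of a result previously established in the paper, so there is no essential obstacle beyond assembling these ingredients correctly.
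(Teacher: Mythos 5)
Your proof is correct and follows essentially the same route as the paper: the paper's own argument establishes $X_t\leq_c \mathrm{Exp}$ from the IFR hypothesis and then runs exactly the rearrangement--star-order--varentropy chain that is packaged in Proposition \ref{propvaren}, so invoking that proposition directly (together with $V(\mathrm{Exp})=1$) is just a cleaner reuse of material already proved. Your explicit verification that $G^{-1}(F_t(x))=\Lambda_t(x)/\lambda$ is convex iff $X_t$ is IFR correctly supplies the step the paper delegates to Theorem 4.B.11 of Shaked and Shanthikumar.
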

\begin{proof}
Since, for a given $t\in S_X$, the residual lifetime $X_t$ is IFR, then for Theorem 4.B.11 of \cite{ShakShant2007} 
one has that $X_t\leq_c \mathrm{Exp}(1)$. 
Moreover, by assumption $X_t$ has a strictly decreasing pdf $f_t(x)$ for all $x\geq 0$. Then, using  Corollary \ref{cr:convtrasfstar} we know that
$X_t\leq_c \mathrm{Exp(1)}$ if and only if  
$$
f_{X_t}(X_t)\leq_* g(\mathrm{Exp(1})),
$$
where $g$ is the pdf of $\mathrm{Exp(1)}$.
Hence, making use of Lemma \ref{cr:starordervarentr} we find
$$
V(X_t)\leq V(\mathrm{Exp}).
$$
Finally, the assertion 
follows recalling that $V(\mathrm{Exp})=1$. 
\end{proof}
\begin{remark}
From Remark \ref{residunimod}, we have that the conditions of Theorem \ref{ifrvarem} easily hold for all unimodal IFR distributions when $t$ is greater than the mode of $X$. For example, this is the case of the residual lives of the Weibull distribution for $k>1$, as it is described in Example \ref{exweibull}. After the mode the varentropy increases but it has an upper bound equal to $1$.
\end{remark}
%

\subsection*{Acknowledgements}
{\small
The authors acknowledge support received from the Ministerio de Econom\'ia
y Competitividad (Spain) under grant
no.\ MTM2017-89577-P, by the Ministerio de Ciencia e Innovaci\'on under grant no.\ PID2020-116216GB-I00 
and by the Consejer\'ia de Econom\'ia, Conocimiento, 
Empresas y Universidad (Junta de Andaluc\'ia, Spain) under grant FEDER-UCA18-107519, and from the Italian MIUR-PRIN 2022, 
project `Anomalous Phenomena on Regular and Irregular Domains: Approximating Complexity for the Applied Sciences', No.\ 2022XZSAFN. 
\par
A.\ Di Crescenzo and L.\ Paolillo are members of the research group GNCS of  Istituto Nazionale di Alta Matematica 
(INdAM).
\par
L.\ Paolillo expresses his warmest thanks to the Departamento de  Estad\'{\i}stica e Investigaci\'on Operativa of Universidad de C\'adiz for the hospitality during the visit carried out in 2020.
}
%

\subsection*{Conflict of interest statement }

On behalf of all authors, the corresponding author states that there is no conflict of interest.


\end{document}